\def\@settitle{\begin{center}%
		\baselineskip14\p@\relax
		\normalfont\LARGE\scshape\bfseries
		\@title
	\end{center}%
}
\def\subsection{\@startsection{subsection}{2}%
	\z@{.5\linespacing\@plus.7\linespacing}{.5\linespacing}%
	{\normalfont\bfseries}}
\def\subsubsection{\@startsection{subsubsection}{3}%
	\z@{.5\linespacing\@plus.7\linespacing}{.5\linespacing}%
	{\normalfont\itshape}}
\definecolor{darkblue}{rgb}{0.0, 0.0, 0.45}
\date{\today}
\theoremstyle{theorem}
\newtheorem{Thm}{Theorem}[section]
\newtheorem{Lem}[Thm]{Lemma}
\newtheorem{Cor}[Thm]{Corollary}
\newtheorem{Def}[Thm]{Definition}
\newtheorem{Rem}[Thm]{Remark}
\theoremstyle{remark}
\newcommand{\EE}{\mathds{E}}
\newcommand{\RR}{\mathbb{R}}
\newcommand{\tp}{\intercal}
\DeclareMathOperator{\gare}{gare}
\title[Robust Control Design for Linear Systems via Multiplicative Noise]{Robust Control Design for Linear Systems via Multiplicative Noise}
\author[First]{Benjamin Gravell}
\author[Second]{Peyman Mohajerin Esfahani}
\author[Third]{Tyler Summers}
\thanks{The authors are with the Control, Optimization, and Networks lab, UT Dallas, The United States ({\tt\{benjamin.gravell, tyler.summers\}@utdallas.edu}), and the Delft Center for Systems and Control, TU Delft, The Netherlands ({\tt P.MohajerinEsfahani@tudelft.nl}). This material is based upon work supported by the United States Air Force Office of Scientific Research under award number FA2386-19-1-4073.}
\begin{document}

\maketitle

\begin{abstract}
Robust stability and stochastic stability have separately seen intense study in control theory for many decades.
In this work we establish relations between these properties for discrete-time systems and employ them for robust control design.
Specifically, we examine a multiplicative noise framework which models the inherent uncertainty and variation in the system dynamics which arise in model-based learning control methods such as adaptive control and reinforcement learning.
We provide results which guarantee robustness margins in terms of perturbations on the nominal dynamics as well as algorithms which generate maximally robust controllers.
\end{abstract}


\section{Introduction}
Model-based learning control, which encompasses classical system identification (e.g. \cite{ljung2001system}) and adaptive control (e.g. \cite{astrom2013adaptive}) as well as branches of modern reinforcement learning (e.g. \cite{nagabandi2018neural,tu2019gap}), universally uses a stochastic data model, where a model is estimated from data corrupted by random noise. 
A salient perennial issue in these methods is ensuring stability despite the presence of concomitant model errors; this is the problem of \emph{robustness}.

Traditional methods for designing robust controllers include $\mathcal{H}_\infty$ control design, which treats modeling error as a worst-case or adversarial  disturbance (\cite{zhou1998essentials,basar1998}), robust optimization over parametric state-space uncertainty sets, which typically involve 
searching for shared Lyapunov functions via convex semidefinite programming (\cite{khargonekar1990robust,nemirovskii1993several,boyd1994linear,Corless1994,de1999new}), and certainty-equivalent control, which utilizes only a nominal model and ignores modeling error entirely. However, since the robust design methods work with uncertainty sets, it is generally not straightforward to relate the uncertainty set descriptions to actual uncertainties arising from a stochastic data model.

Alternatively, in this paper we explore the connection between a special type of \emph{stochastic stability} and \emph{robust stability} and exploit this connection for robust control design.
In particular, we use a \emph{multiplicative noise} model where the noise is viewed as a representation of uncertainty in the nominal system model. This framework is naturally disposed toward trading off performance and robustness according to uncertainty directions and magnitudes which can be estimated from trajectory data during model-based learning control.
The study of multiplicative noise models has a long history in control theory (\cite{kushner1967stochastic,wonham1967optimal,kozin1969survey}). In contrast with the well-known additive noise setting, multiplicative noise captures linear dependence of the noise on the state and control input, which occurs intrinsically in a diverse array of modern control systems such as robotics (\cite{dutoit2011robot}), networked systems with noisy communication channels (\cite{hespanha2007survey}), modern power networks with high penetration of intermittent renewables (\cite{carrasco2006power}), turbulent fluid flow (\cite{lumley2007stochastic}). Linear systems with multiplicative noise are particularly attractive as a stochastic modeling framework because they remain simple enough to admit closed-form expressions for stability and optimal control via generalized Lyapunov and Riccati equations. A multiplicative noise model also holds a distinct advantage of being sensitive to \emph{structured} uncertainties in \emph{specific directions directly related to data}, as opposed to generic sets governed by norm balls as in \cite{Dean2019}.

In this paper we consider a \textbf{fundamental question}: \\
\emph{What is the set of perturbations to the system matrix where the perturbed system can be guaranteed stable, given knowledge only of the nominal system dynamics and stochastic stability of a system with multiplicative noise?} 

This question was considered by \cite{bernstein1987} for the continuous-time setting. Surprisingly, it was noted that the addition of multiplicative noise could actually \emph{stabilize} a deterministically unstable system when interpreted in the sense of Stratonovich (rather than It\^{o}) \cite{arnold1983stabilization}. Despite this subtle difficulty, combining mean-square stability of a multiplicative noise system with a \emph{right-shift} of the system dynamics, i.e., increasing the real parts of the eigenvalues of $A$ as $A \leftarrow A + c I$ was shown sufficient to ensure robust deterministic stability. Similarly, we develop conditions for discrete-time systems which combine mean-square stability of a multiplicative noise system with a \emph{scaling} of the system dynamics, i.e., increasing the absolute value of eigenvalues of $A$ as $A \leftarrow cA$.

In this paper we make the following contributions:
\begin{itemize}
    \item We develop a result utilizing shared Lyapunov functions that establishes robust stability of a set of deterministic systems given stochastic (mean-square) stability of another system with multiplicative noise (Theorem \ref{thm:robust_multi_shared_lyap}).
    \item We develop a complementary result utilizing an auxiliary system with scaled dynamics matrices that similarly establishes robust stability of a set of deterministic systems given stochastic (mean-square) stability of another system with multiplicative noise (Theorem \ref{thm:robust_multi_aux}).
    \item We show that both theorems yield robustness sets whose size increases monotonically with the multiplicative noise variances and collapse to zero in the case of zero noise.
    \item We develop a corresponding pair of algorithms which efficiently compute controllers that simultaneously maximize robustness and minimize a quadratic cost.
\end{itemize}

We elaborate on the robust stability problem in Section \ref{sec:problem_formulation}, develop theorems in Sections \ref{sec:robust_shared} and \ref{sec:robust_aux}, develop corresponding algorithms in Section \ref{sec:control_design}, give numerical examples in Section \ref{sec:numerical_results}, and conclude in Section \ref{sec:conclusion}.

\section{Problem formulation} \label{sec:problem_formulation}
Consider a discrete-time linear time-invariant (LTI) system
\begin{align} \label{eq:lti_true}
    x_{t+1} = \bar{A} x_t + \bar{B} u_t
\end{align}
where the entries of $\bar{A}$ and $\bar{B}$ are unknown constants and are approximated (perhaps from noisy trajectory data) by the known nominal matrices $A$ and $B$ leading to the nominal model
\begin{align} \label{eq:lti_nominal}
    x_{t+1} = A x_t + B u_t
\end{align}
where $x_t \in \RR^n$ is the system state, $u_t \in \RR^m$ is the control input, $A \in \RR^{n \times n}$ is the dynamics matrix and $B \in \RR^{n \times m}$ is the input matrix.
In order to stabilize the system in \eqref{eq:lti_nominal}, we use linear state feedback $u_t = K x_t$ with gain matrix $K \in \RR^{m \times n}$; classical results \cite{kalman1960contributions, kalman1963controllability} show that if the pair $(A,B)$ is controllable, then the closed-loop eigenvalues of $A+BK$ can be placed arbitrarily by choosing suitable gains.
A robust stabilization problem is to find a linear state-feedback control $u_t = K x_t$ such that the closed-loop nominal system remains stable under fixed perturbations of $A$ and $B$ i.e. that
\begin{align} \label{eq:lti_approx}
    x_{t+1} = \big( (A + \Delta A) + (B+\Delta B) K \big) x_t
\end{align}
is stable for some set of perturbations $\Delta A \in \mathcal{A}$ and $\Delta B \in \mathcal{B}$, ideally containing the true matrices $\bar{A}$ and $\bar{B}$.

As a parallel development, consider an LTI system with multiplicative noise with dynamics
\begin{align} \label{eq:ltim}
    \quad x_{t+1} = \Big(A  + \sum_{i=1}^p \gamma_{ti} A_i \Big) x_t +  \Big(B + \sum_{j=1}^q \delta_{tj} B_j \Big) u_t
\end{align}
Multiplicative noise terms are modeled by the i.i.d. across time (white), zero-mean, mutually independent scalar random variables $\gamma_{ti}$ and $\delta_{tj}$, which have variances $\alpha_i$ and $\beta_j$, respectively. The matrices $A_i \in \RR^{n \times n}$ and $B_i \in \RR^{n \times m}$ specify how each scalar noise term affects the dynamics and input matrices.

Stability of such a system depends on the behavior of the second moments (covariance) of the state over time as formalized by the notion of \emph{mean-square stability}, a form of robust stability which is stricter than stabilizability of the nominal system $(A,B)$ and limits the size of the multiplicative noise variances (\cite{kozin1969survey, willems1976feedback}):
\begin{Def}[Mean-square stability]
The system in \eqref{eq:ltim} is mean-square stable if and only if
\begin{align*}
    \lim_{t \rightarrow \infty} \EE \big[ x_t x_t^\tp \big] = 0 \ \forall \ \|x_0\| < \infty
\end{align*}
\end{Def}
In order to stabilize the system in \eqref{eq:lti_nominal}, we again use linear state feedback $u_t = K x_t$; mean-square stability of the closed-loop system with this control is equivalently characterized by the solution of a \emph{generalized Lyapunov equation} (GLE) (\cite{kleinman1969, boyd1994linear}):
\begin{Lem} \label{lem:mss_glyap}
The system in \eqref{eq:ltim} is mean-square stable in closed-loop with state feedback $u_t = K x_t$ if and only if for any $Q \succ 0$ there exists $P \succ 0$ satisfying 
\begin{align} \label{eq:glyap}
    P & =  Q  + (A+BK)^\tp P (A+BK) 
     + \sum_{i=1}^p \alpha_i A_i^\tp P A_i + \sum_{j=1}^q \beta_j K^\tp B_j^\tp P B_j K.
\end{align}
\end{Lem}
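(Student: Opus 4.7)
The plan is to reduce the problem to a standard linear-operator Lyapunov argument on the space of symmetric matrices. Write $A_K := A + BK$ and let $X_t := \EE[x_t x_t^\tp]$. Substituting $u_t = Kx_t$ into \eqref{eq:ltim} and taking expectations, the zero-mean and mutual independence of the $\gamma_{ti},\delta_{tj}$ (and their independence from $x_t$) kill all cross terms, leaving the linear recursion
\begin{equation*}
X_{t+1} \;=\; \mathcal{L}(X_t) \;:=\; A_K X_t A_K^\tp + \sum_{i=1}^p \alpha_i A_i X_t A_i^\tp + \sum_{j=1}^q \beta_j B_j K X_t K^\tp B_j^\tp.
\end{equation*}
Mean-square stability is then equivalent to $\mathcal{L}^t(X_0) \to 0$ for every $X_0 \succeq 0$, i.e., to $\rho(\mathcal{L}) < 1$. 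A direct computation with the trace inner product shows that the adjoint is
\begin{equation*}
\mathcal{L}^\ast(P) \;=\; A_K^\tp P A_K + \sum_{i=1}^p \alpha_i A_i^\tp P A_i + \sum_{j=1}^q \beta_j K^\tp B_j^\tp P B_j K,
\end{equation*}
so that the GLE \eqref{eq:glyap} is precisely $(\mathcal{I}-\mathcal{L}^\ast)(P) = Q$, and $\rho(\mathcal{L}) = \rho(\mathcal{L}^\ast)$.

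For the ``only if'' direction, assume mean-square stability, so $\rho(\mathcal{L}^\ast) < 1$. For any $Q \succ 0$ define $P := \sum_{t=0}^\infty (\mathcal{L}^\ast)^t(Q)$; the series converges by the spectral-radius assumption, satisfies $P = Q + \mathcal{L}^\ast(P)$ by telescoping, and lies in the positive cone with $P \succeq Q \succ 0$ since $\mathcal{L}^\ast$ preserves the PSD cone.

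For the ``if'' direction, pick any $Q \succ 0$ and a corresponding $P \succ 0$ solving the GLE, and consider the stochastic Lyapunov function $V_t := \EE[x_t^\tp P x_t] = \Tr(P X_t)$. Using the adjoint identity together with the GLE,
\begin{equation*}
V_{t+1} \;=\; \Tr\!\bigl(P\,\mathcal{L}(X_t)\bigr) \;=\; \Tr\!\bigl(\mathcal{L}^\ast(P)\,X_t\bigr) \;=\; V_t - \Tr(Q X_t).
\end{equation*}
Since $V_t \geq 0$ is non-increasing it converges, forcing $\Tr(Q X_t) \to 0$, and $Q \succ 0$ then gives $X_t \to 0$. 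The main bookkeeping obstacle is verifying that the odd-order noise cross terms in the covariance recursion really do vanish and that $\mathcal{L}^\ast$ has the claimed form; once these are in place, the argument mirrors the standard deterministic discrete-time Lyapunov equation proof.
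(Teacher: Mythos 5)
The paper does not actually prove Lemma \ref{lem:mss_glyap}; it is stated as a known result with citations to the literature, so there is no in-paper proof to compare against. Your argument is the standard proof of this equivalence and it is correct: the covariance recursion $X_{t+1}=\mathcal{L}(X_t)$ holds because the noises are zero-mean, mutually independent, and white, hence independent of $x_t$ (which is measurable with respect to the noises up to time $t-1$), so all cross terms vanish; the adjoint computation under the trace inner product is right; the Neumann series $P=\sum_{t\ge 0}(\mathcal{L}^\ast)^t(Q)$ gives the ``only if'' direction with $P\succeq Q\succ 0$; and the monotone Lyapunov argument $V_{t+1}=V_t-\Tr(QX_t)$ gives the ``if'' direction. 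The one step worth spelling out is the identification of mean-square stability with $\rho(\mathcal{L})<1$: the definition quantifies over deterministic $x_0$, i.e.\ rank-one $X_0=x_0x_0^\tp$, but since rank-one positive semidefinite matrices span the symmetric matrices, $\mathcal{L}^t(x_0x_0^\tp)\to 0$ for all $x_0$ forces $\mathcal{L}^t\to 0$ and hence $\rho(\mathcal{L})<1$ (your ``if'' direction does not even need this, since it establishes $X_t\to 0$ directly, and your ``only if'' direction only uses the implication from mean-square stability to $\rho(\mathcal{L}^\ast)=\rho(\mathcal{L})<1$). No gaps.
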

\begin{Cor}
    In the discrete-time setting, mean-square stability of \eqref{eq:ltim} with control $u_t = K x_t$ implies deterministic stability of \eqref{eq:lti_nominal} with the same control $u_t = K x_t$.
\end{Cor}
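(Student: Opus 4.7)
The plan is to read off the conclusion directly from the generalized Lyapunov equation supplied by Lemma \ref{lem:mss_glyap}. Concretely, assume mean-square stability of \eqref{eq:ltim} in closed-loop with $u_t = K x_t$. Then for any chosen $Q \succ 0$, Lemma \ref{lem:mss_glyap} delivers a $P \succ 0$ satisfying
\begin{align*}
    P = Q + (A+BK)^\tp P (A+BK) + \sum_{i=1}^p \alpha_i A_i^\tp P A_i + \sum_{j=1}^q \beta_j K^\tp B_j^\tp P B_j K.
\end{align*}
The first key observation is that the two noise sums on the right-hand side are positive semidefinite: each summand is of the form $M^\tp P M$ with $P \succ 0$ and a nonnegative scalar weight $\alpha_i$ or $\beta_j$. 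I would then denote this PSD remainder by $R \succeq 0$ and rewrite the identity as
\begin{align*}
    (A+BK)^\tp P (A+BK) - P = -Q - R.
\end{align*}

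The second step is to note that the right-hand side is strictly negative definite, since $-Q \prec 0$ and $-R \preceq 0$, so $-Q - R \prec 0$. This is precisely a strict discrete-time Lyapunov inequality
\begin{align*}
    (A+BK)^\tp P (A+BK) - P \prec 0
\end{align*}
with Lyapunov certificate $P \succ 0$. By the standard discrete-time Lyapunov stability theorem, this implies that every eigenvalue of $A+BK$ lies strictly inside the unit disk, i.e., $A+BK$ is Schur stable. Consequently the deterministic closed-loop nominal system $x_{t+1} = (A+BK) x_t$ is asymptotically stable, which is exactly the claimed deterministic stability of \eqref{eq:lti_nominal} under the same feedback $u_t = K x_t$.

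There is essentially no obstacle here; the only thing to be careful about is noting nonnegativity of the variances $\alpha_i, \beta_j$ (which is implicit in their definition as variances) to justify dropping the noise terms while preserving the direction of the matrix inequality. The argument is strictly discrete-time: it avoids the Stratonovich/Itô subtleties mentioned in the introduction because in discrete time there is no stochastic-calculus correction, and the nominal drift $A+BK$ appears undistorted inside the GLE.
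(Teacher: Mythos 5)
Your argument is correct and matches the paper's proof essentially verbatim: both drop the positive semidefinite noise terms from the generalized Lyapunov equation of Lemma \ref{lem:mss_glyap} to obtain the strict discrete-time Lyapunov inequality $P \succeq Q + (A+BK)^\tp P (A+BK)$, which certifies Schur stability of $A+BK$. Your version simply spells out the intermediate steps slightly more explicitly.
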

\begin{proof}
    From \eqref{eq:glyap}, strict mean-square stability implies existence of $P \succ 0$ such that
    \begin{align*}
        P   & = Q  + (A+BK)^\tp P (A+BK) 
            + \sum_{i=1}^p \alpha_i A_i^\tp P A_i + \sum_{j=1}^q \beta_j K^\tp B_j^\tp P B_j K \\
            & \succeq Q  + (A+BK)^\tp P (A+BK)
    \end{align*}
    which ensures stability of $A+BK$.
\end{proof}

One mean-square stabilizing control arises by solving the infinite-horizon multiplicative noise LQR problem
\begin{align}
    \underset{{\pi \in \Pi}}{\text{min}}  & \quad \EE_{\{\gamma_{ti}\}, \{\delta_{tj}\}} \sum_{t=0}^\infty \left(x_t^\tp Q x_t + u_t^\tp R u_t\right), \nonumber \\
    \text{s.t.}                           & \quad x_{t+1} = \Big(A  + \sum_{i=1}^p \gamma_{ti} A_i \Big) x_t +  \Big(B + \sum_{j=1}^q \delta_{tj} B_j \Big) u_t, \nonumber
\end{align}
where $Q\succeq 0$ and $R\succ 0$.
We assume that the problem data $A$, $B$, $\alpha_i$, $A_i$, $\beta_j$, and $B_j$ permit the existence of a finite solution, in which case the system is called \emph{mean-square stabilizable}. 
Dynamic programming can be used to show that the optimal policy is linear state feedback $u_t = K^* x_t$, where $K^* \in \RR^{m \times n}$ denotes the optimal gain matrix, and the resulting optimal cost $V(x_0)$ for a fixed initial state $x_0$ is quadratic, i.e., $V(x_0) = x_0^\tp P x_0$, where $P \in \RR^{n \times n}$ is a symmetric positive definite matrix. The optimal cost is given by the solution of the \emph{generalized algebraic Riccati equation} (GARE)
\begin{align*} 
P   & = Q + A^\tp P A + \sum_{i=1}^p \alpha_i A_i^\tp P A_i 
     -  A^\tp P B (R + B^\tp P B + \sum_{j=1}^q \beta_j B_j^\tp P B_j)^{-1} B^\tp P A \nonumber
\end{align*}
which can be derived similarly to the GARE given by \cite{mclane1972} for continuous-time systems.
The solution $P = \gare(A,B,Q,R,\alpha_i,\beta_j,A_i,B_j)$ can be obtained via the value iteration recursion
\begin{align*}
P_{t+1} & =  Q + A^\tp P_t A + \sum_{i=1}^p \alpha_i A_i^\tp P_t A_i 
        -  A^\tp P_t B (R + B^\tp P_t B + \sum_{j=1}^q \beta_j B_j^\tp P_t B_j)^{-1} B^\tp P_t A, \nonumber
\end{align*}
with $P_0 = Q$ or via semidefinite programming formulations (\cite{boyd1994linear,el1995state,li2005estimation}). The optimal gain is then
\begin{align*}
    K^* = -  \Big(R + B^\tp P B + \sum_{j=1}^q \beta_j B_j^\tp P B_j \Big)^{-1} B^\tp P A.
\end{align*}

\subsection{Generalized eigenvalues and semidefiniteness}
The following lemmas regarding generalized eigenvalue problems and semidefiniteness will be needed later:
\begin{Lem} \label{lem:gen_eig}
If $\lambda_{\max}$ is the maximum generalized eigenvalue which solves
$ Av = \lambda Bv $,
then
$\lambda_{\max} B \succeq A$.
\end{Lem}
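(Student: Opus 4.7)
The plan is to reduce the generalized eigenvalue problem to a standard symmetric one by a congruence transformation and then use the elementary fact that every symmetric matrix is dominated by its largest eigenvalue times the identity. I will implicitly work under the standing hypotheses that $A$ is symmetric and $B \succ 0$, which is the setting in which the generalized eigenvalue problem $Av=\lambda Bv$ has real spectrum and in which the conclusion $\lambda_{\max}B\succeq A$ makes sense as a Löwner order inequality (this matches how the lemma will be invoked later in the paper via shared Lyapunov function arguments).

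First I would use $B\succ 0$ to take the symmetric square root $B^{1/2}$ and rewrite $Av=\lambda Bv$ as
\begin{equation*}
    B^{-1/2} A B^{-1/2}\,(B^{1/2}v) = \lambda\,(B^{1/2}v),
\end{equation*}
so that the generalized eigenvalues of $(A,B)$ coincide with the standard eigenvalues of the symmetric matrix $C \Let B^{-1/2} A B^{-1/2}$. In particular $\lambda_{\max}$ is the largest eigenvalue of $C$, hence by the spectral theorem for symmetric matrices $\lambda_{\max} I \succeq C$.

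Next I would pre- and post-multiply this inequality by $B^{1/2}$ (a congruence with a positive definite matrix, which preserves the Löwner order):
\begin{equation*}
    B^{1/2}(\lambda_{\max} I) B^{1/2} \succeq B^{1/2} C B^{1/2},
\end{equation*}
which simplifies to $\lambda_{\max} B \succeq A$ as required.

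There is really no difficult step here; the only subtlety is the implicit assumption that $B\succ 0$ (so that $B^{1/2}$ and $B^{-1/2}$ are well defined and the congruence is monotone in the Löwner order) and that $A$ is symmetric (so that $C$ is symmetric and the spectral theorem applies). If instead one only had $B\succeq 0$ singular, the statement would need to be qualified, but in the intended applications $B$ will arise as a strictly positive definite Lyapunov certificate, so this assumption is harmless.
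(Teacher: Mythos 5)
Your argument is correct, and your care in flagging the implicit hypotheses ($A$ symmetric, $B \succ 0$) is well placed --- the lemma as stated in the paper leaves them unsaid, and the accompanying corollary about singular $B$ confirms that $B \succ 0$ is indeed the intended regime. The paper itself omits the proof, indicating only that it ``follows readily from the method of Lagrange multipliers and Rayleigh quotients,'' which points to a slightly different and more economical route than yours: from the variational characterization $\lambda_{\max} = \max_{v \neq 0} \frac{v^\tp A v}{v^\tp B v}$ one gets $v^\tp A v \leq \lambda_{\max}\, v^\tp B v$ for every $v$, i.e.\ $v^\tp(\lambda_{\max} B - A)v \geq 0$, which is the conclusion directly --- no matrix square roots, no congruence, no appeal to the spectral theorem beyond the reality of the spectrum. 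Your reduction to the symmetric matrix $C = B^{-1/2} A B^{-1/2}$ buys a little more (it exhibits the generalized eigenvalues as ordinary eigenvalues of a single symmetric matrix, which is also how one would compute them numerically), at the cost of introducing $B^{\pm 1/2}$ and invoking monotonicity of the L\"owner order under congruence. Both arguments are standard and both are complete; the Rayleigh-quotient version is the more elementary of the two and is presumably what the authors had in mind.
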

\begin{Cor}
    If $B$ is singular, then $\lambda_{\max}$ becomes infinite.
\end{Cor}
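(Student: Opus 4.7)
The plan is to show that when $B$ is singular the generalized eigenvalue equation $Av = \lambda Bv$ admits no finite solution in at least one direction, so the standard convention (via the reciprocal pencil) forces $\lambda_{\max} = +\infty$.

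First I would pick a nonzero $v \in \ker(B)$, which exists because $B$ is singular, so that $Bv = 0$. The eigenvalue equation then reduces to $Av = \lambda \cdot 0 = 0$. In the generic case $Av \neq 0$, no finite $\lambda \in \RR$ makes this hold; rewriting in the reciprocal form $Bv = \mu A v$ with $\mu = 1/\lambda$ instead forces $\mu = 0$, corresponding to $\lambda = +\infty$. In the degenerate case $v \in \ker(A) \cap \ker(B)$, the identity $Av = \lambda Bv$ holds for every $\lambda \in \RR$, so the set of admissible eigenvalues is already unbounded above. Either way, the maximum eigenvalue fails to be finite.

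As a consistency check against Lemma \ref{lem:gen_eig}, I would evaluate the asserted ordering $\lambda_{\max} B \succeq A$ on the same null vector $v$: the left side gives $v^\tp (\lambda_{\max} B) v = 0$ for any finite $\lambda_{\max}$, while the right side is $v^\tp A v$. Whenever this quadratic form is positive on some null direction of $B$ (which is the typical situation, for instance when $A \succ 0$), no finite scalar can restore the semidefinite inequality, so one is forced to set $\lambda_{\max} = +\infty$ in order for the conclusion of the parent lemma to persist.

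The main subtlety I expect is that when $\ker(A) \cap \ker(B) \neq \{0\}$ the pencil $(A,B)$ is formally singular and every complex number is nominally an eigenvalue, so "the" maximum eigenvalue is not well-defined as a real number at all. Adopting the reciprocal-pencil viewpoint up front absorbs both cases into a single statement by directly identifying the infinite eigenvalue with a nontrivial element of $\ker(B)$, which is the natural reading of the corollary.
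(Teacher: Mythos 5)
Your argument is correct, but note that the paper does not actually supply a proof of this corollary: it explicitly omits the proofs of Lemma \ref{lem:gen_eig} and its corollary as ``widely known,'' gesturing only at Lagrange multipliers and Rayleigh quotients. Your primary route --- take $0 \neq v \in \ker(B)$, observe that $Av = \lambda Bv$ collapses to $Av = 0$, and pass to the reciprocal pencil $Bv = \mu Av$ to identify $\mu = 0$ with $\lambda = +\infty$ --- is the standard pencil-theoretic version of the fact. Your ``consistency check'' is in fact closer to what the paper has in mind: $\lambda_{\max}$ is the supremum of the generalized Rayleigh quotient $v^\tp A v / v^\tp B v$, which is forced to $+\infty$ precisely because the denominator vanishes on $\ker(B)$ while the numerator is generically positive there, and this is also the form that connects directly to the conclusion $\lambda_{\max} B \succeq A$ of the parent lemma. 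So your two paragraphs together cover both natural proofs; either alone would suffice. The one genuine subtlety --- that for $v \in \ker(A) \cap \ker(B)$ the pencil is singular and ``the maximum eigenvalue'' is not well defined as a real number --- you flag and resolve by convention, which is the right reading; it is worth adding that strictly speaking the corollary is a blanket statement that fails in the uninteresting case where $v^\tp A v \le 0$ on all of $\ker(B)$ (then a finite $\lambda$ can still satisfy $\lambda B \succeq A$), but in every place the paper invokes the lemma the right-hand matrix contains a $Q \succeq I$ term and is nonsingular, so the corollary functions only as a caution and the degenerate case never arises.
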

We omit the proofs since these results are widely known; they follow readily from the method of Lagrange multipliers and Rayleigh quotients.

Every symmetric matrix $S$ can be split into positive and negative semidefinite parts via eigendecomposition as
\begin{align*}
    S = S^+ + S^-
\end{align*}
where
\begin{align*}
    S^+ = \sum_{i} \lambda_i v_i v_i^\tp \succeq S \succeq 0, \quad 
    S^- = \sum_{j} \lambda_j v_j v_j^\tp \preceq S \preceq 0, 
\end{align*}
where $\lambda_i$ and $\lambda_j$ are positive and negative eigenvalues respectively with associated eigenvectors $v_i$ and $v_j$.

\section{Robustness via shared Lyapunov functions} \label{sec:robust_shared}
We begin by ignoring the contribution of feedback control; we will introduce the control again in Sec. \ref{sec:control_design}.
We also restrict our search over $\Delta A$ to the set 
\begin{align*}
    \Delta A \in \mathcal{A} = \left\{ \sum_{i=1}^p \mu_i A_i \ \Big| \  \mu_i \in \RR,  0 \leq \mu_i < y \theta_i, \sum_{i=1}^p \theta_i = 1 \right\}
\end{align*}
The $\theta_i$ are scalars that represent the relative amount of uncertainty in each direction, while $y$ is a scalar governing the maximum magnitude of the perturbations. The $\theta_i$ and $y$ can be estimated from statistics of sampled trajectory data, e.g., using bootstrap resampling methods. This approach is intuitive; mean-square stability under stochastic instantaneous perturbations in specific directions $A_i$ ought to ensure deterministic stability under constant shifts of the dynamics in those same directions.
Note the number of linearly independent uncertainty directions $p$ is limited by the number of entries of $A$ i.e. $p \leq n^2$.
Consider the problem of finding the largest deviation scalar $y^*$ which can be tolerated while still guaranteeing stability of the perturbed deterministic system
\begin{align*}
    x_{t+1} = \big( A + \Delta A \big) x_t, \ \Delta A \in \mathcal{A}
\end{align*}
based on mean-square stability of the stochastic system
\begin{align*}
    x_{t+1} = (A+\gamma_{ti} A_i) x_t
\end{align*}
with $\EE [ \gamma_{ti} ] = 0$, $\EE [\gamma_{ti}^2 ] = \alpha_i > 0$.

\clearpage
\subsection{Scalar case}
First, we treat the scalar case where $n = p = 1$ so $A_1 = 1$ and $\theta_1 = 1$ without loss of generality.
\begin{Lem}
Suppose 
\begin{align*}
    x_{t+1} = (A + \gamma_{t}) x_t
\end{align*}
is mean-square stable where $A$, $x_t$, $\gamma_t $ are scalars with $\mathds{E} [ \gamma_{t}^2 ] = \alpha > 0$. 
Then, the perturbed deterministic system
\begin{align} \label{eq:perturbed_det_sys}
    x_{t+1} = (A + y) x_t
\end{align}
is stable for any fixed perturbation $|y| \leq \sqrt{A^2 + \alpha} - | A |$.
\end{Lem}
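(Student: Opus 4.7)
The plan is to reduce everything to two scalar inequalities, one supplied by mean-square stability and one by the deterministic stability condition, and then bridge them with the triangle inequality.

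First I would translate the mean-square stability hypothesis into an algebraic bound. In the scalar case with no control, Lemma~\ref{lem:mss_glyap} (the generalized Lyapunov equation) collapses to $P = Q + A^2 P + \alpha P$, so mean-square stability is equivalent to
\begin{align*}
    A^2 + \alpha < 1.
\end{align*}
Equivalently (and more directly), iterating the recursion $\EE[x_{t+1}^2] = (A^2 + \alpha)\EE[x_t^2]$, which uses zero mean, variance $\alpha$, and independence of $\gamma_t$ from $x_t$, yields $\EE[x_t^2] = (A^2+\alpha)^t\EE[x_0^2]$, so $\lim_{t\to\infty}\EE[x_t^2]=0$ for every $x_0$ forces $A^2+\alpha<1$, i.e.\ $\sqrt{A^2+\alpha}<1$.

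Next I would write down the deterministic stability criterion for \eqref{eq:perturbed_det_sys}: the scalar system $x_{t+1}=(A+y)x_t$ is stable iff $|A+y|<1$. The remaining task is therefore purely algebraic: show that under the bound $|y|\le \sqrt{A^2+\alpha}-|A|$ (which is meaningful precisely because $\alpha>0$ makes the right-hand side positive), we have $|A+y|<1$. By the triangle inequality,
\begin{align*}
    |A+y| \;\le\; |A| + |y| \;\le\; |A| + \bigl(\sqrt{A^2+\alpha}-|A|\bigr) \;=\; \sqrt{A^2+\alpha},
\end{align*}
and combining this with $\sqrt{A^2+\alpha}<1$ from the previous step delivers $|A+y|<1$, completing the argument.

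There is no real obstacle in the scalar case — the only point that deserves care is ensuring the inequality from mean-square stability is \emph{strict}, so that the final bound $|A+y|\le\sqrt{A^2+\alpha}$ still yields strict stability of the perturbed system even at the boundary $|y|=\sqrt{A^2+\alpha}-|A|$. This scalar derivation will serve as the template for the matrix generalization in the next subsection, where the triangle inequality will have to be replaced by a shared Lyapunov function argument and $A^2+\alpha<1$ by a positive-definite version of the GLE.
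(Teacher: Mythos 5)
Your proposal is correct and follows essentially the same route as the paper: reduce the generalized Lyapunov equation to the scalar relation $P = Q + A^2P + \alpha P$, deduce the strict inequality $\sqrt{A^2+\alpha} < 1$ from positivity of $P$, and conclude via the triangle inequality $|A+y| \le |A| + |y| \le \sqrt{A^2+\alpha} < 1$. The additional moment-recursion derivation of $A^2+\alpha<1$ is a harmless (and slightly more self-contained) supplement, but does not change the argument.
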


\begin{proof}
The GLE in \eqref{eq:glyap} reduces to
\begin{align*}
    P = Q + A^2 P + \alpha P
\end{align*}
where $P$, $Q$ are scalars with solution
\begin{align*}
    P = Q \left[ {1 - (A^2 + \alpha)} \right]^{-1}
\end{align*}
which is positive only when $\sqrt{A^2 + \alpha} < 1$. By assumption the system is mean-square stable, so Lemma \ref{lem:mss_glyap} implies that the solution $P > 0$
and thus indeed $\sqrt{A^2 + \alpha} < 1$.
By the restriction on $y$ and the triangle inequality 
\begin{align*}
    \rho(A + y) = |A + y | \leq |A| + | y | \leq \sqrt{A^2 + \alpha} < 1 ,
\end{align*}
proving stability of \eqref{eq:perturbed_det_sys}.
\end{proof}

This simple example demonstrates that the robustness margin increases monotonically with the multiplicative noise variance and when $\alpha = 0$, i.e. $|a| \rightarrow 1$, the bound collapses and no robustness is guaranteed.

\subsection{Multivariate case}

The optimal bound $y^*$ is found by solving the program
\begin{align} \label{eq:gen_SDP}
    \begin{aligned}
        & \underset{{y,P}}{\text{maximize}} && y \\
        &\text{subject to} 
        &&   P \succeq  I + A^\tp P A + \sum_{i=1}^p \alpha_i A_i^\tp P A_i \\
        &&&  P \succeq \left( A + \Delta A \right)^\tp P \left( A + \Delta A \right) \\
        &&& \Delta A = y \sum_{i=1}^{p} k_i \theta_i A_i \ \forall \ k_i \in \{-1,+1\}
    \end{aligned}
\end{align}
i.e. maximizing $y$ while ensuring that there exists a $P$ which generates a Lyapunov function which guarantees both mean-square stability of the stochastic system and deterministic stability of the perturbed deterministic system. Here we have arbitrarily chosen $Q = I$ e.g. as in \eqref{eq:glyap} without loss of generality since the constraints pertain only to stability, which is invariant to the choice of $Q$.
Since the program is quasiconvex in $y$, it can be solved by bisection over $y$ and solving a feasibility SDP for each fixed $y$, with the solution being the largest $y$ which admits a feasible solution to the SDP.

The set of constraints in the second line of \eqref{eq:gen_SDP} form corners of a convex box polytope in the space of $n \times n$ matrices, which is necessary and sufficient to guarantee stability of \eqref{eq:lti_approx} (\cite{boyd1994linear,Corless1994}). Thus, from the perspective of verifying stability of $A + \Delta A$ this procedure no better than simply solving the same program \eqref{eq:gen_SDP} with the first constraint deleted, which has a larger feasible set and thus will achieve at least as good a bound as \eqref{eq:gen_SDP}. However, the solution of \eqref{eq:gen_SDP} defines a hard upper limit on the following bounds we develop in this section which are based on a shared Lyapunov function, since \eqref{eq:gen_SDP} gives the optimal bound. The bounds we develop in this section trade optimality (conservativeness) for the assurance that $P$ guarantees stability of the perturbed deterministic system without explicitly using the Lyapunov inequality ${P \succeq (A+\Delta A)^\tp P (A+\Delta A)}$.

Giving up optimization over $P$ and instead choosing $Q$ arbitrarily (later in Sec. \ref{sec:control_design}, $Q$ will be chosen as the cost matrix of an LQR control design) and calculating the associated $P$, we obtain the following result:
\begin{Thm} \label{thm:robust_multi_shared_lyap}
Suppose
\begin{align} \label{eq:mult_system}
    x_{t+1} = \left( A+ \sum_{i=1}^{p} \gamma_{ti} A_i \right) x_t
\end{align}
is mean-square stable with $\EE [ \gamma_{ti} ] = 0$, $\EE [ \gamma_{ti}^2 ] = \alpha_i > 0$. \\
Fix a $Q \succeq I$ and the solution $P \succ 0$ to 
\begin{align} \label{eq:glyap_p}
    P = pQ + A^\tp P A + \sum_{i=1}^{p} \alpha_i A_i^\tp P A_i 
\end{align}
Let $\eta_i > 0$ be scalars which satisfy
\begin{align} \label{eq:NLMI}
    pQ + \sum_{i=1}^p \alpha_i A_i^\tp P A_i 
    & \succeq 
    \sum_{i=1}^p \eta_i \left( A_i^\tp P A + A^\tp P A_i \right)^+
     + \sum_{i=1}^p \sum_{j=1}^p \eta_i \eta_j \left( A_i^\tp P A_j + A_j^\tp P A_i \right)^+ 
\end{align}
Then the deterministic system 
\begin{align} \label{eq:det_system}
    x_{t+1} = \left( A + \sum_{i=1}^{p} \mu_i A_i \right) x_t
\end{align}
is deterministically stable for any 
\begin{align} \label{eq:robust_multi_shared_lyap_bound}
    0 \leq \mu_i < \eta_i
\end{align}
\end{Thm}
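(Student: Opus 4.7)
The plan is to use the $P$ from the generalized Lyapunov equation \eqref{eq:glyap_p} as a \emph{single} quadratic Lyapunov function for the entire family of perturbed deterministic systems \eqref{eq:det_system}. Mean-square stability together with $Q \succeq I$ guarantees via Lemma \ref{lem:mss_glyap} that such a $P \succ 0$ exists, so the task reduces to verifying the discrete-time Lyapunov inequality $P \succ M^\tp P M$ for $M \Let A + \sum_{i=1}^p \mu_i A_i$ uniformly over $\mu$ in the open box $\prod_{i=1}^p[0,\eta_i)$.

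The mechanical step is to expand
\[
P - M^\tp P M = (P - A^\tp P A) - \sum_{i=1}^p \mu_i \bigl(A_i^\tp P A + A^\tp P A_i\bigr) - \sum_{i,j=1}^p \mu_i\mu_j\, A_i^\tp P A_j,
\]
substitute \eqref{eq:glyap_p} to replace $P - A^\tp P A$ with $pQ + \sum_i \alpha_i A_i^\tp P A_i$, and symmetrize the last double sum as $\tfrac12\sum_{i,j}\mu_i\mu_j\bigl(A_i^\tp P A_j + A_j^\tp P A_i\bigr)$. Applying the elementary split $cS \preceq cS^+$ (valid for $c \geq 0$ and symmetric $S$) term by term then bounds every perturbation term above in the Loewner order by its positive-part analogue, matching the structure of the right-hand side of \eqref{eq:NLMI}.

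The key bookkeeping device is to introduce $\nu_{\max} \Let \max_i \mu_i/\eta_i \in [0,1)$, which is well-defined thanks to $\eta_i > 0$ and strictly less than one thanks to $\mu_i < \eta_i$. Since $\mu_i \leq \nu_{\max}\eta_i$ and $\mu_i\mu_j \leq \nu_{\max}^2\eta_i\eta_j \leq \nu_{\max}\eta_i\eta_j$, the perturbation is dominated by $\nu_{\max}$ times the right-hand side of \eqref{eq:NLMI}. Invoking \eqref{eq:NLMI} then yields
\[
P - M^\tp P M \succeq (1-\nu_{\max})\Bigl[pQ + \sum_{i=1}^p \alpha_i A_i^\tp P A_i\Bigr] \succeq (1-\nu_{\max})\,p\,Q \succ 0,
\]
which is the strict Lyapunov inequality that delivers stability of \eqref{eq:det_system}.

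The step I anticipate as the main obstacle is precisely the strictness: the NLMI \eqref{eq:NLMI} is stated non-strictly, so a direct substitution of $\eta_i$ for $\mu_i$ would only produce $P \succeq M^\tp P M$, insufficient to exclude a unit-modulus eigenvalue. The rescaling by $\nu_{\max}$ is the lever that converts the strict scalar inequalities $\mu_i < \eta_i$ into a uniform Loewner-order slack $(1-\nu_{\max})\,pQ$, which the standing assumption $Q \succeq I$ renders strictly positive definite.
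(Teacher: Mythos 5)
Your proof is correct and follows the same core route as the paper: use the single $P$ from the generalized Lyapunov equation \eqref{eq:glyap_p} as a shared Lyapunov function, substitute $P - A^\tp P A = pQ + \sum_i \alpha_i A_i^\tp P A_i$, and dominate the cross and quadratic perturbation terms by their positive parts so that \eqref{eq:NLMI} can be invoked. The one place where you genuinely improve on the paper's argument is the strictness bookkeeping: the paper's chain only establishes the non-strict inequality $P \succeq \bigl(A+\sum_i \mu_i A_i\bigr)^\tp P \bigl(A+\sum_i \mu_i A_i\bigr)$, which on its own does not exclude a unit-modulus eigenvalue, whereas your $\nu_{\max} = \max_i \mu_i/\eta_i < 1$ rescaling extracts the uniform slack $(1-\nu_{\max})\,pQ \succ 0$ and closes that gap cleanly. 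You are also more careful with the symmetrization of the quadratic term (the honest expansion gives $\tfrac12\sum_{i,j}\mu_i\mu_j(A_i^\tp P A_j + A_j^\tp P A_i)$, which is dominated by the coefficient-one sum of positive parts appearing in \eqref{eq:NLMI}, so nothing is lost). No gaps; this is a valid and slightly sharper rendering of the paper's proof.
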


\begin{proof}
It is evident that valid $\eta_i > 0$ exist since ${pQ  + \sum_{i=1}^{p} \alpha_i A_i^\tp P A_i }$ is strictly positive definite.
Rearranging \eqref{eq:glyap_p} to ${pQ  + \sum_{i=1}^{p} \alpha_i A_i^\tp P A_i = P - A^\tp P A }$ and substituting gives
\begin{align*}
    P
    & \succeq A^\tp P A + \sum_{i=1}^p \eta_i \left( A_i^\tp P A + A^\tp P A_i \right)^+ \nonumber 
    + \sum_{i=1}^p \sum_{j=1}^p \eta_i \eta_j \left( A_i^\tp P A_j + A_j^\tp P A_i \right)^+ \\
    & \succeq A^\tp P A + \sum_{i=1}^p \mu_i \left( A_i^\tp P A + A^\tp P A_i \right)^+ \nonumber  
    + \sum_{i=1}^p \sum_{j=1}^p \mu_i \mu_j \left( A_i^\tp P A_j + A_j^\tp P A_i \right)^+ \\
    & \succeq A^\tp P A + \sum_{i=1}^p \mu_i \left( A_i^\tp P A + A^\tp P A_i \right) \nonumber  
    + \sum_{i=1}^p \sum_{j=1}^p \mu_i \mu_j \left( A_i^\tp P A_j + A_j^\tp P A_i \right) \\
    & = \left( A + \sum_{i=1}^{p} \mu_i A_i \right)^\tp P \left( A + \sum_{i=1}^{p} \mu_i A_i \right)
\end{align*}
which proves stability of \eqref{eq:det_system}.
\end{proof}

\begin{Rem}
    The unidirectional bound in \eqref{eq:robust_multi_shared_lyap_bound} of Thm. \ref{thm:robust_multi_shared_lyap} can be made bidirectional by replacing \eqref{eq:NLMI} with
    \begin{align*}
    pQ + \sum_{i=1}^p \alpha_i A_i^\tp P A_i 
    & \succeq 
    \sum_{i=1}^p \eta_i Y_i 
    + \sum_{i=1}^p \sum_{j=1}^p \eta_i \eta_j Z_{ij} \nonumber        
    \end{align*}
    where
    \begin{align}
    \begin{array}{cc}
          Y_i     \succeq \left( A_i^\tp P A + A^\tp P A_i \right)^+ , \quad \text{ and } 
        & Y_i     \succeq -\left( A_i^\tp P A + A^\tp P A_i \right)^-, \\
          Z_{ij}  \succeq \left( A_i^\tp P A_j + A_j^\tp P A_i \right)^+, \text{ and } 
        & Z_{ij}  \succeq -\left( A_i^\tp P A_j + A_j^\tp P A_i \right)^-, 
    \end{array}
    \end{align}
    yielding the bidirectional bound $| \mu_i | < \eta_i$.
\end{Rem}

\clearpage
\begin{Rem}
    Let $\theta_i \geq 0$ be scalars such that $\sum_{i=1}^p \theta_i = 1$; these denote relative uncertainty in directions $A_i$.
    The largest robust stability bounds with respect to this choice of $\theta_i$ are obtained by setting $\eta_i = y \theta_i$ and maximizing the scalar $y$, which can be accomplished via bisection.
    As discussed earlier, optimizing a bidirectional bound over $P$, $Q$, and $y$ is equivalent to solving the full program in \eqref{eq:gen_SDP}.
\end{Rem}

For $p=1$, the Theorem \ref{thm:robust_multi_shared_lyap} reduces as follows:
\begin{Cor} \label{cor:robust_multi_shared_lyap_single}
Suppose
\begin{align*} 
    x_{t+1} = \left( A+ \gamma_{t1} A_1 \right) x_t
\end{align*}
is mean-square stable with $\EE [ \gamma_{t1} ] = 0$, $\EE [ \gamma_{t1}^2 ] = \alpha_1 > 0$. \\
Fix a $Q \succeq I$ and the solution $P \succ 0$ to 
\begin{align*}
    P = Q + A^\tp P A + \alpha_1 A_1^\tp P A_1
\end{align*}
Let $\zeta_1 > 0$ be a scalar which satisfies
\begin{align} \label{eq:NLMI_single}
    \frac{1}{\sqrt{\zeta_1^2 + \alpha_1} - \zeta_1} Q + 2 \zeta_1 A_1^\tp P A_1 \succeq (A^\tp P A_1 + A_1^\tp P A )^+ .
\end{align}
Then the deterministic system 
\begin{align*}
    x_{t+1} = \left( A + \sum_{i=1}^{p} \mu_i A_i \right) x_t
\end{align*}
is deterministically stable for any 
\begin{align*}
    0 \leq \mu_1 < \eta_1
\end{align*}
where $\eta_1 > 0$ is a scalar uniquely determined by $\zeta_1$ as
\begin{align*}
    \eta_1 = \sqrt{\zeta_1^2 + \alpha_1} - \zeta_1 \ \left( \ \leq \sqrt{\alpha_1} \ \right)
\end{align*}
Also, $\eta_1$ satisfies
\begin{align}
    Q + \alpha_1 A_1^\tp P A_1 
    & \succeq 
    \eta_1 \left( A_1^\tp P A + A^\tp P A_1 \right)^+ 
     + 2 \eta_1^2 A_1^\tp P A_1 \label{eq:NLMI_single2}
\end{align}
in accordance with Thm. \ref{thm:robust_multi_shared_lyap}.
\end{Cor}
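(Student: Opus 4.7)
The plan is to derive the strict Lyapunov inequality $P \succ (A+\mu_1 A_1)^\tp P(A+\mu_1 A_1)$ for all $\mu_1 \in [0,\eta_1)$ by combining the GLE with \eqref{eq:NLMI_single} through a scalar change of variables; deterministic stability of $A+\mu_1 A_1$ then follows immediately. The bridge to \eqref{eq:NLMI_single2} comes from observing that specializing \eqref{eq:NLMI} to $p=1$ reduces the single-index sum to $\eta_1(A_1^\tp P A + A^\tp P A_1)^+$ and collapses the double sum to $\eta_1^2(A_1^\tp P A_1 + A_1^\tp P A_1)^+ = 2\eta_1^2 A_1^\tp P A_1$ (using $P\succ 0$, so $A_1^\tp P A_1 \succeq 0$), which is exactly the specialization of Theorem~\ref{thm:robust_multi_shared_lyap}'s sufficient condition to this setting.

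The key algebraic identity is that $\eta_1 = \sqrt{\zeta_1^2+\alpha_1}-\zeta_1$ is the unique positive root of $\eta_1(\eta_1+2\zeta_1)=\alpha_1$, equivalently $\alpha_1-\eta_1^2 = 2\eta_1\zeta_1$; squaring $(\eta_1+\zeta_1)^2 = \zeta_1^2+\alpha_1 \le (\zeta_1+\sqrt{\alpha_1})^2$ then yields the bound $\eta_1 \le \sqrt{\alpha_1}$. For the stability proof, the GLE $P - A^\tp P A = Q + \alpha_1 A_1^\tp P A_1$ gives
\begin{equation*}
F(\mu_1) \;\coloneqq\; P - (A+\mu_1 A_1)^\tp P (A+\mu_1 A_1) \;=\; Q + (\alpha_1-\mu_1^2)\, A_1^\tp P A_1 \;-\; \mu_1 (A_1^\tp P A + A^\tp P A_1).
\end{equation*}
Applying the sign-splitting $M \preceq M^+$ with $\mu_1\ge 0$ and then bounding the positive part via \eqref{eq:NLMI_single} scaled by $\mu_1$, one arrives at
\begin{equation*}
F(\mu_1) \;\succeq\; \bigl(1-\tfrac{\mu_1}{\eta_1}\bigr) Q \;+\; (\eta_1-\mu_1)(\eta_1+\mu_1+2\zeta_1)\, A_1^\tp P A_1,
\end{equation*}
where the coefficient of $A_1^\tp P A_1$ is simplified through $\alpha_1 = \eta_1^2 + 2\eta_1\zeta_1$. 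Both summands are strictly positive for $\mu_1 \in [0,\eta_1)$ — the first since $Q \succeq I \succ 0$ — so $F(\mu_1)\succ 0$, delivering the strict Lyapunov inequality and hence asymptotic stability.

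The main obstacle is bookkeeping rather than substance: keeping the three scalars ($\zeta_1$, $\eta_1$, $\mu_1$) separate and applying the positive-part decomposition $M=M^++M^-$ with the correct sign, since \eqref{eq:NLMI_single} only controls the positive part of $A_1^\tp P A + A^\tp P A_1$. The restriction $\mu_1 \ge 0$ is precisely what legitimizes the step $-\mu_1 M \succeq -\mu_1 M^+$, tracing the unidirectional nature of the conclusion to the same structural feature of Theorem~\ref{thm:robust_multi_shared_lyap}; a two-sided extension $|\mu_1|<\eta_1$ would follow by the same symmetrization trick recorded in the remark after that theorem.
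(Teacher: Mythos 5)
Your proof is correct and follows essentially the same route as the paper's: the paper multiplies \eqref{eq:NLMI_single} by $\eta_1$, uses the identity $\alpha_1 = \eta_1^2 + 2\eta_1\zeta_1$ to arrive at \eqref{eq:NLMI_single2} (which it recognizes as the $p=1$ instance of \eqref{eq:NLMI}), and then simply cites Theorem \ref{thm:robust_multi_shared_lyap}, whereas you additionally inline that theorem's Lyapunov computation to obtain the explicit lower bound on $F(\mu_1) = P - (A+\mu_1 A_1)^\tp P (A+\mu_1 A_1)$. The only nitpick is that your second summand $(\eta_1-\mu_1)(\eta_1+\mu_1+2\zeta_1)\,A_1^\tp P A_1$ is in general only positive semidefinite (as $A_1^\tp P A_1$ may be singular), but strict positive definiteness of $F(\mu_1)$ still follows from the first summand $\bigl(1-\tfrac{\mu_1}{\eta_1}\bigr)Q \succ 0$, so the conclusion stands.
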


\begin{proof}
Multiplying both sides of \eqref{eq:NLMI_single} by $\eta_1$ and using \\ 
${\eta_1 = \sqrt{\zeta_1^2 + \alpha_1} - \zeta_1}$ gives
\begin{align}
    Q + 2 \eta_1 \zeta_1 A_1^\tp P A_1 
    & = 
    \frac{\eta_1}{\sqrt{\zeta_1^2 + \alpha_1} - \zeta_1} Q + 2 \eta_1 \zeta_1 A_1^\tp P A_1 \nonumber \\
    & \succeq \eta_1 \left( A^\tp P A_1 + A_1^\tp P A \right)^+ \label{eq:cor_intermediate_lmi}
\end{align}
Rearranging $\eta_1 = \sqrt{\zeta_1^2 + \alpha_1} - \zeta_1$ gives $\alpha_1 = \eta_1^2 + 2 \eta_1 \zeta_1$. Adding $2 \eta_1^2 A_1^\tp P A_1$ to both sides of \eqref{eq:cor_intermediate_lmi} and substituting $\alpha_1 = \eta_1^2 + 2 \eta_1 \zeta_1$ gives exactly \eqref{eq:NLMI_single2}.
Thus the condition \eqref{eq:NLMI} of Thm. \ref{thm:robust_multi_shared_lyap} is satisfied by ${\eta_1 = \sqrt{\zeta_1^2 + \alpha_1} - \zeta_1}$. Applying Thm. \ref{thm:robust_multi_shared_lyap} completes the proof.
\end{proof}

If all robustness bounds $\eta_i$ in Theorem \ref{thm:robust_multi_shared_lyap} are chosen proportional to $\sqrt{\zeta_i^2 + \alpha_i} -\zeta_i$ (like in Cor. \ref{cor:robust_multi_shared_lyap_single}), we obtain the following corollary:

\begin{Cor} \label{cor:robust_multi_shared_lyap_prop}
Suppose the system in \eqref{eq:mult_system} is mean-square stable with $\EE [ \gamma_{ti} ] = 0$, $\EE [ \gamma_{ti}^2 ] = \alpha_i > 0$. 
Fix a $Q \succeq I$ and the solution $P \succ 0$ to \eqref{eq:glyap_p}.
Let $\eta_i > 0$ be scalars which satisfy \eqref{eq:NLMI} and are chosen proportional to $\sqrt{\zeta_i^2 + \alpha_i} -\zeta_i$
where $\zeta_i$ are scalars which marginally satisfy
\begin{align} \label{eq:NLMI_upr}
    \frac{1}{\sqrt{\zeta_i^2 + \alpha_i} - \zeta_i} Q + 2 \zeta_i A_i^\tp P A_i \succeq (A^\tp P A_i + A_i^\tp P A)^+ .
\end{align} 
Then the deterministic system in \eqref{eq:det_system}
is stable for any 
$ 0 \leq \mu_i < \eta_i$
where the $\eta_i$ are upper bounded by
\begin{align*} 
    \eta_i < \sqrt{\zeta_i^2 + \alpha_i} -\zeta_i  < \sqrt{\alpha_i} 
\end{align*}
\end{Cor}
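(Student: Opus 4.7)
The plan is to establish the two strict inequalities $\eta_i < \tau_i$ and $\tau_i < \sqrt{\alpha_i}$ separately, where $\tau_i := \sqrt{\zeta_i^2 + \alpha_i} - \zeta_i$.

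\emph{Outer inequality.} Rationalising the surd gives $\tau_i = \alpha_i/(\sqrt{\zeta_i^2 + \alpha_i} + \zeta_i)$. Marginality of \eqref{eq:NLMI_upr} together with $Q \succ 0$ forces $\zeta_i > 0$ whenever $(A^\tp P A_i + A_i^\tp P A)^+$ is nonzero (otherwise one could decrease $\zeta_i$ while maintaining the LMI, contradicting marginality). The denominator then strictly exceeds $\sqrt{\alpha_i}$, yielding $\tau_i < \sqrt{\alpha_i}$.

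\emph{Inner inequality.} Write the common proportion as $c > 0$, so $\eta_i = c \tau_i$; the goal is to show $c < 1$. Mirroring the single-index calculation in the proof of Cor.~\ref{cor:robust_multi_shared_lyap_single}, multiply \eqref{eq:NLMI_upr} by $\tau_i$ and substitute $2 \tau_i \zeta_i = \alpha_i - \tau_i^2$ (obtained by squaring $\tau_i + \zeta_i = \sqrt{\zeta_i^2 + \alpha_i}$) to obtain the per-index identity
\begin{align*}
    Q + \alpha_i A_i^\tp P A_i \succeq \tau_i \left( A^\tp P A_i + A_i^\tp P A \right)^+ + \tau_i^2 A_i^\tp P A_i .
\end{align*}
Summing over $i = 1, \dots, p$ expends the entire LHS budget $pQ + \sum_i \alpha_i A_i^\tp P A_i$ of \eqref{eq:NLMI} against the first RHS sum at $c = 1$ plus an aggregate reserve of $\sum_i \tau_i^2 A_i^\tp P A_i$. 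The multi-noise double sum in \eqref{eq:NLMI}, however, already contributes $2 c^2 \sum_i \tau_i^2 A_i^\tp P A_i$ from its $i = j$ diagonal alone (because $(A_i^\tp P A_i + A_i^\tp P A_i)^+ = 2 A_i^\tp P A_i$), with additional non-negative cross-terms on top. At $c = 1$ this diagonal demand doubles the available reserve, which is infeasible; monotonicity of the feasible set in the common proportion $c$ then forces every admissible $c$ strictly below $1$, i.e.\ $\eta_i < \tau_i$.

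\emph{Main obstacle.} The subtlety is that \eqref{eq:NLMI} is a single pooled matrix inequality, so the per-index marginal slack of \eqref{eq:NLMI_upr} need not translate directly into a per-index overshoot in \eqref{eq:NLMI}: deficit in one direction could in principle be absorbed by slack in another. I will address this by carrying out the comparison entirely at the summed level, relying on the observation that the diagonal-doubling factor $(A_i^\tp P A_i + A_i^\tp P A_i)^+ = 2 A_i^\tp P A_i$ in the multi-noise double sum produces an unavoidable aggregate gap that no redistribution of slack across indices can close, and that the strictness $A_i^\tp P A_i \not= 0$ for at least one $i$ (ensured by $\alpha_i > 0$ and nontrivial $A_i$) keeps the gap strictly positive.
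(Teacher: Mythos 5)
Your proposal follows essentially the same route as the paper's proof: assume the boundary value $\eta_i = \sqrt{\zeta_i^2+\alpha_i}-\zeta_i$, multiply \eqref{eq:NLMI_upr} by it, substitute $\alpha_i = \eta_i^2 + 2\eta_i\zeta_i$, sum over $i$, and argue that \eqref{eq:NLMI} cannot then hold because its right-hand side demands additional positive-semidefinite terms beyond what the summed per-index inequalities supply. Two small points of comparison: your algebra is actually more careful than the paper's at the diagonal term (you correctly obtain the coefficient $\tau_i^2$ on the reserve $A_i^\tp P A_i$, whereas the analogous computation in Corollary \ref{cor:robust_multi_shared_lyap_single} slips by a factor of two), and your rationalization argument for $\sqrt{\zeta_i^2+\alpha_i}-\zeta_i < \sqrt{\alpha_i}$ supplies a justification (via $\zeta_i>0$) that the paper simply asserts. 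The one caveat: the ``main obstacle'' you flag is genuine and is not actually closed by your summed-level comparison --- nor by the paper's own proof, which makes the identical inferential leap. Showing that the inequality derivable from \eqref{eq:NLMI_upr} has a smaller right-hand side than \eqref{eq:NLMI} requires does not by itself show that \eqref{eq:NLMI} fails at $c=1$; to make the contradiction rigorous one would need to use marginality of \eqref{eq:NLMI_upr} to exhibit a common direction in which the summed inequality is tight while the surplus terms $\sum_i \tau_i^2 A_i^\tp P A_i + \sum_{i\neq j}\tau_i\tau_j\left(A_i^\tp P A_j + A_j^\tp P A_i\right)^+$ are strictly positive. Since you reproduce the paper's argument, including this informal step, the proposal should be regarded as matching the paper's proof in substance.
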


\begin{proof}
    The proof proceeds by contradiction. Suppose
    \begin{align*}
        \eta_i = \sqrt{\zeta_i^2 + \alpha_i} - \zeta_i
    \end{align*}
    From \eqref{eq:NLMI_upr} and using an argument identical to Corollary \ref{cor:robust_multi_shared_lyap_single} we have
    \begin{align*}
        Q + 2 \eta_i \zeta_i A_i^\tp P A_i 
        & \succeq \eta_i \left( A^\tp P A_i + A_i^\tp P A \right)^+      
    \end{align*}    
    Summing over all the noises,
    \begin{align} \label{eq:sum_contradict}
        pQ + \sum_{i=1}^p 2 \eta_i \zeta_i A_i^\tp P A_i 
        & \succeq \sum_{i=1}^p \eta_i \left( A^\tp P A_i + A_i^\tp P A \right)^+      
    \end{align}
    Substituting $\alpha_i = \eta_i^2 + 2 \eta_i \zeta_i$, the matrix inequality in \eqref{eq:NLMI} reduces to
    \begin{align*}
        pQ + \sum_{i=1}^p 2 \eta_i \zeta_i A_i^\tp P A_i 
        & \succeq 
        \sum_{i=1}^p \eta_i \left( A^\tp P A_i + A_i^\tp P A \right)^+
        + \sum_{i=1}^p \sum_{j \neq i} \eta_i \eta_j \left( A_i^\tp P A_j + A_j^\tp P A_i \right)^+ \nonumber 
    \end{align*}
    which is a contradiction; we need the additional terms 
    \begin{align}
        \sum_{i=1}^p \sum_{j \neq i} \eta_i \eta_j \left( A_i^\tp P A_j + A_j^\tp P A_i \right)^+
    \end{align}
    on the right-hand side of \eqref{eq:sum_contradict} in order to match \eqref{eq:NLMI} in Theorem \ref{thm:robust_multi_shared_lyap}, which shows that the bounds $\eta_i$ must be less than $ \sqrt{\zeta_i^2 + \alpha_i} -\zeta_i $.
\end{proof}

Corollaries \ref{cor:robust_multi_shared_lyap_single} and \ref{cor:robust_multi_shared_lyap_prop} go towards showing the functional dependence of upper bounds of the robustness margins on the multiplicative noise variance, namely a $\sqrt{\alpha_i}$ relation. Significantly, the robustness margins collapse to nothing when the variances are all zero and increase monotonically with increasing noise variances.

\subsection{Conservative simplifications}
It can be shown that $\frac{1}{\sqrt{\zeta_i^2 + \alpha_i} - \zeta_i}$ is convex in $\zeta_i$, so any linearization (first-order Taylor series expansion) will be a global underestimator of this function. Thus a conservative solution can be found by linearization, yielding a convex semidefinite constraint which can be expressed as a generalized eigenvalue problem which can be solved efficiently. For example, linearizing $\frac{1}{\sqrt{\zeta_i^2 + \alpha_i} - \zeta_i}$ about $\zeta_i = 0$ yields $\frac{1}{\sqrt{\alpha_i}} + \frac{1}{\alpha_i} \zeta_i $. This is worked out in the following lemma:
\begin{Lem}
Define $A$, $A_i$, $\alpha_i$, $P$, $Q$ as in Cor. \ref{cor:robust_multi_shared_lyap_prop}.
Let $\lambda_i$ be the maximum generalized eigenvalue which solves
\begin{align*}
    \left[ \left( A^\tp P A_i + A_i^\tp P A \right)^+ - \frac{1}{\sqrt{\alpha}} Q \right] v = \lambda_i \left[ \frac{1}{\alpha} Q + 2 A_i^\tp P A_i \right] v .
\end{align*}
Then $\zeta_i \geq \lambda_i$ satisfies \eqref{eq:NLMI_upr}.
\end{Lem}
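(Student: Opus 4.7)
The plan is to invoke the convexity-based linearization foreshadowed just before the lemma statement, and then reduce the resulting linear matrix inequality to the generalized eigenvalue problem via Lemma \ref{lem:gen_eig}. Define $f(\zeta) \Let 1/\bigl(\sqrt{\zeta^2 + \alpha_i} - \zeta\bigr)$ for $\zeta \geq 0$. Rationalizing the denominator rewrites $f(\zeta) = \bigl(\sqrt{\zeta^2 + \alpha_i} + \zeta\bigr)/\alpha_i$, exhibiting $f$ as the sum of a convex term ($\sqrt{\zeta^2+\alpha_i}/\alpha_i$) and a linear term ($\zeta/\alpha_i$), hence convex. Consequently the first-order Taylor expansion of $f$ about $\zeta = 0$, which evaluates to $\tfrac{1}{\sqrt{\alpha_i}} + \tfrac{1}{\alpha_i}\zeta$, is a global underestimator:
\begin{align*}
    f(\zeta) \;\geq\; \frac{1}{\sqrt{\alpha_i}} + \frac{1}{\alpha_i}\zeta \qquad \forall\, \zeta \geq 0 .
\end{align*}

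Next I would use this underestimator to replace the nonconvex LMI \eqref{eq:NLMI_upr}, namely $f(\zeta_i) Q + 2\zeta_i A_i^\tp P A_i \succeq (A^\tp P A_i + A_i^\tp P A)^+$, with the stricter linearized LMI
\begin{align*}
    \Bigl(\tfrac{1}{\sqrt{\alpha_i}} + \tfrac{1}{\alpha_i}\zeta_i\Bigr) Q + 2\zeta_i A_i^\tp P A_i \;\succeq\; (A^\tp P A_i + A_i^\tp P A)^+ .
\end{align*}
Since $Q \succeq 0$ and $f(\zeta_i) \geq \tfrac{1}{\sqrt{\alpha_i}} + \tfrac{1}{\alpha_i}\zeta_i$, any $\zeta_i$ satisfying the linearized LMI also satisfies \eqref{eq:NLMI_upr}. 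Rearranging the linearized LMI yields the equivalent form
\begin{align*}
    \zeta_i \Bigl[\tfrac{1}{\alpha_i} Q + 2 A_i^\tp P A_i\Bigr] \;\succeq\; (A^\tp P A_i + A_i^\tp P A)^+ - \tfrac{1}{\sqrt{\alpha_i}} Q ,
\end{align*}
which is precisely the generalized eigenvalue inequality whose critical scalar is the $\lambda_i$ defined in the lemma.

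To close the argument I would apply Lemma \ref{lem:gen_eig}: the maximum generalized eigenvalue $\lambda_i$ satisfies $\lambda_i \bigl[\tfrac{1}{\alpha_i} Q + 2 A_i^\tp P A_i\bigr] \succeq (A^\tp P A_i + A_i^\tp P A)^+ - \tfrac{1}{\sqrt{\alpha_i}} Q$, establishing the linearized LMI at $\zeta_i = \lambda_i$. For any $\zeta_i \geq \lambda_i$, monotonicity then promotes the inequality since the coefficient matrix $\tfrac{1}{\alpha_i} Q + 2 A_i^\tp P A_i$ is positive definite (using $Q \succeq I$, $\alpha_i > 0$, and $A_i^\tp P A_i \succeq 0$), so scaling $\zeta_i$ upward only enlarges the left-hand side in the Loewner order. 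Combining this with the convexity-based underestimation then yields \eqref{eq:NLMI_upr}.

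The only subtlety is to verify the well-posedness of the generalized eigenvalue problem, i.e., that the right-hand matrix is strictly positive definite so that Lemma \ref{lem:gen_eig} applies and the inequality is preserved under enlargement of $\zeta_i$; this follows immediately from the standing assumption $Q \succeq I$ and the positivity of $\alpha_i$. Beyond that, the proof is a clean composition of two standard facts (the tangent-hyperplane underestimation property of convex functions and the Rayleigh-quotient characterization of the max generalized eigenvalue), so no additional heavy machinery is required.
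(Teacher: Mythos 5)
Your proof is correct and follows essentially the same route as the paper's: apply Lemma \ref{lem:gen_eig} to the stated generalized eigenvalue problem, rearrange into the linearized LMI, and use convexity of $\zeta \mapsto 1/(\sqrt{\zeta^2+\alpha_i}-\zeta)$ so that the tangent at $\zeta=0$ underestimates it, then extend from $\lambda_i$ to any $\zeta_i \geq \lambda_i$ by monotonicity. The only difference is the order in which the monotonicity and underestimation steps are composed, which is immaterial.
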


\begin{proof}
By Lemma \ref{lem:gen_eig} we have the semidefinite bound 
\begin{align*}
    \lambda_i \left( \frac{1}{\alpha_i}  Q + 2 A_i^\tp P A_i \right)
    & \succeq 
    \left( A^\tp P A_i + A_i^\tp P A \right)^+  -  \frac{1}{\sqrt{\alpha_i}} Q .
\end{align*}
Rearranging,
\begin{align*}    
    \left(\frac{1}{\sqrt{\alpha_i}} + \frac{1}{\alpha_i} \lambda_i \right) Q + 2 \lambda_i A_i^\tp P A_i 
    & \succeq 
    \left( A^\tp P A_i + A_i^\tp P A \right)^+  .
\end{align*}
Since $\frac{1}{\sqrt{\lambda_i^2 + \alpha_i} - \lambda_i}$ is a convex function of $\lambda_i$,
\begin{align*}
    \frac{1}{\sqrt{\lambda_i^2 + \alpha_i} - \lambda_i} \geq \frac{1}{\sqrt{\alpha_i}} + \frac{1}{\alpha_i} \lambda_i
\end{align*}
and thus
\begin{align*}
    \frac{1}{\sqrt{\lambda_i^2 + \alpha_i} - \lambda_i} Q + 2 \lambda_i A_i^\tp P A_i
    & \succeq 
    \left( A^\tp P A_i + A_i^\tp P A \right)^+ 
\end{align*}
which is exactly the constraint in \eqref{eq:NLMI_upr} with $\zeta_i = \lambda_i$. Noting that $\frac{1}{\sqrt{\zeta_i^2 + \alpha_i} - \zeta_i}$ is nondecreasing in $\zeta_i$ completes the proof.
\end{proof}

Similarly, an even more conservative bound is obtained by neglecting the contribution of $2 \zeta_i A_i^\tp P A_i$ in \eqref{eq:NLMI_upr}, again resulting in a generalized eigenvalue problem.

\begin{Lem}
Define $A$, $A_i$, $\alpha_i$, $P$, $Q$ as in Cor. \ref{cor:robust_multi_shared_lyap_prop}.
Let $\lambda_i$ be the maximum generalized eigenvalue which solves
\begin{align*}
    \left( A^\tp P A_i + A_i^\tp P A \right)^+ v = \lambda_i Q v .
\end{align*}
Then $\zeta_i \geq \frac{1}{2} \left( \alpha \lambda_i - \frac{1}{\lambda_i} \right)$ satisfies \eqref{eq:NLMI_upr}.
\end{Lem}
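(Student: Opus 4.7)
The plan is to apply Lemma~\ref{lem:gen_eig} to the stated generalized eigenvalue problem to obtain a linear semidefinite bound in terms of $Q$, then verify that the omitted term $2\zeta_i A_i^\tp P A_i$ only helps, and finally translate the resulting scalar threshold into the stated lower bound on $\zeta_i$.

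First I would note that since $(A^\tp P A_i + A_i^\tp P A)^+ \succeq 0$ and $Q \succ 0$, the maximum generalized eigenvalue $\lambda_i$ is nonnegative and finite, so by Lemma~\ref{lem:gen_eig}
\begin{equation*}
    \lambda_i Q \succeq (A^\tp P A_i + A_i^\tp P A)^+.
\end{equation*}
Since $P \succ 0$ gives $A_i^\tp P A_i \succeq 0$, the term $2\zeta_i A_i^\tp P A_i$ on the left-hand side of \eqref{eq:NLMI_upr} is positive semidefinite for any $\zeta_i \geq 0$, so it suffices to exhibit $\zeta_i$ for which
\begin{equation*}
    \frac{1}{\sqrt{\zeta_i^2 + \alpha_i} - \zeta_i}\, Q \succeq \lambda_i Q \succeq (A^\tp P A_i + A_i^\tp P A)^+;
\end{equation*}
this reduces the problem to the scalar condition $\frac{1}{\sqrt{\zeta_i^2 + \alpha_i} - \zeta_i} \geq \lambda_i$.

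Next I would invert and rearrange. For $\lambda_i > 0$, the scalar condition is equivalent to $\sqrt{\zeta_i^2 + \alpha_i} \leq \zeta_i + \tfrac{1}{\lambda_i}$. Both sides are nonnegative (since $\zeta_i \geq 0$), so squaring is reversible and produces $\alpha_i \leq \tfrac{2\zeta_i}{\lambda_i} + \tfrac{1}{\lambda_i^2}$, which rearranges to $\zeta_i \geq \tfrac{1}{2}\bigl(\alpha_i \lambda_i - \tfrac{1}{\lambda_i}\bigr)$. Because $\sqrt{\zeta_i^2 + \alpha_i} - \zeta_i$ is nonincreasing in $\zeta_i$, the inequality persists for every $\zeta_i$ above this threshold, matching the statement of the lemma (with $\alpha$ read as $\alpha_i$).

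The main obstacle, such as it is, will be the degenerate case $\lambda_i = 0$: then $(A^\tp P A_i + A_i^\tp P A)^+ = 0$ and \eqref{eq:NLMI_upr} is trivially satisfied by any $\zeta_i \geq 0$, while the bound $\tfrac{1}{2}(\alpha_i \lambda_i - 1/\lambda_i)$ is formally $-\infty$; this edge case must be acknowledged but causes no real difficulty. Once $\lambda_i > 0$ is assumed, the sign checks required to square the scalar inequality go through automatically, and the rest is elementary algebra.
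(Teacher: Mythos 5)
Your argument is correct and follows essentially the same route as the paper's proof: apply Lemma \ref{lem:gen_eig} to get $\lambda_i Q \succeq (A^\tp P A_i + A_i^\tp P A)^+$, reduce to the scalar condition $\frac{1}{\sqrt{\zeta_i^2+\alpha_i}-\zeta_i} \geq \lambda_i$, rearrange to the stated threshold on $\zeta_i$, and observe that the discarded term $2\zeta_i A_i^\tp P A_i \succeq 0$ only strengthens the left-hand side of \eqref{eq:NLMI_upr}. Your treatment is in fact slightly more careful than the paper's, since you make the squaring/sign checks and the degenerate case $\lambda_i = 0$ explicit.
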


\begin{proof}
By Lemma \ref{lem:gen_eig} we have the semidefinite bound 
\begin{align*}
    \lambda_i Q  & \succeq \left( A^\tp P A_i + A_i^\tp P A \right)^+
\end{align*}
Setting
\begin{align*}
    \lambda_i < \frac{1}{\sqrt{\zeta_i^2 + \alpha_i} - \zeta_i}
\end{align*}
and rearranging yields
\begin{align*}
    \zeta_i \geq \frac{1}{2} \left( \alpha \lambda_i - \frac{1}{\lambda_i} \right)
\end{align*}
and
\begin{align*}
    \left( \frac{1}{\sqrt{\zeta_i^2 + \alpha_i} - \zeta_i} \right) Q  & \succeq \left( A^\tp P A_i + A_i^\tp P A \right)^+
\end{align*}
Adding $2 \zeta_i A_i^\tp P A_i \succeq 0$ to the left side gives exactly the constraint in \eqref{eq:NLMI_upr}.
\end{proof}

\section{Robustness via stability of auxiliary systems} \label{sec:robust_aux}

Now, instead of requiring the same Lyapunov function to ensure mean-square stability of a stochastic system and stability of a perturbed deterministic system with the same nominal $A$, we construct auxiliary stochastic systems whose mean-square stability implies deterministic stability of the ``target'' perturbed deterministic system. Such an approach can be fundamentally more flexible than using a shared Lyapunov function since the open-loop dynamics of the auxiliary system are permitted to be significantly less stable.

\clearpage
\begin{Thm} \label{thm:robust_multi_aux}
Suppose the stochastic system
\begin{align*}
    x_{t+1} = \left( A \sqrt{1+\sum_{i=1}^{p} \eta_i} + \sum_{i=1}^{p} \gamma_{ti} A_i \right) x_t
\end{align*}
with $\EE [\gamma_{ti}^2] = \alpha_i \geq \eta_i \left( 1 + \sum_{j=1}^p \eta_j \right)$, $\eta_i \geq 0$ is mean-square stable.
Then the deterministic system
\begin{align*}
    x_{t+1} = \Big(A + \sum_{i=1}^p \mu_i A_i \Big) x_t
\end{align*}
is stable for all $|\mu_i| < \eta_i$.
\end{Thm}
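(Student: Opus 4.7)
My plan is to use the positive definite matrix $P$ that mean-square stability of the auxiliary system produces (via Lemma \ref{lem:mss_glyap} with $Q=I$) as a common quadratic Lyapunov function for every perturbed deterministic system in the stated robustness set $\{A + \sum_i \mu_i A_i : |\mu_i| < \eta_i\}$. Two semidefinite completion-of-squares estimates then absorb the cross and quadratic perturbation terms into the generous $(1+\sum_i \eta_i)\,A^\tp P A$ and $\sum_i \alpha_i A_i^\tp P A_i$ summands coming from the auxiliary-system generalized Lyapunov equation.

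Concretely, Lemma \ref{lem:mss_glyap} applied to the auxiliary system with $Q = I$ and no input gives $P\succ 0$ satisfying $P = I + (1+S)A^\tp P A + \sum_i \alpha_i A_i^\tp P A_i$, where $S \coloneqq \sum_i \eta_i$. Writing $\bar A \coloneqq \sum_i \mu_i A_i$ and $M \coloneqq A + \bar A$, it suffices to show $M^\tp P M \prec P$. A first completion of squares, obtained by expanding $(\sqrt{S}\,A \pm \bar A/\sqrt{S})^\tp P(\sqrt{S}\,A \pm \bar A/\sqrt{S}) \succeq 0$, delivers the semidefinite bound $M^\tp P M \preceq (1+S)\,A^\tp P A + \tfrac{1+S}{S}\,\bar A^\tp P \bar A$, which lines the ``$A$-part'' up exactly with the Lyapunov equation and reduces the problem to dominating $\tfrac{1+S}{S}\,\bar A^\tp P \bar A$ by $\sum_i \alpha_i A_i^\tp P A_i$.

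A second, weighted estimate, coming from convexity of the squared norm applied to $x^\tp \bar A^\tp P \bar A x = \|\sum_i \mu_i P^{1/2}A_i x\|^2$ with convex weights $w_i = \eta_i/S$, yields $\bar A^\tp P \bar A \preceq \sum_i (\mu_i^2/w_i)\,A_i^\tp P A_i = \sum_i (S\mu_i^2/\eta_i)\,A_i^\tp P A_i$, and therefore $\tfrac{1+S}{S}\,\bar A^\tp P \bar A \preceq \sum_i \tfrac{(1+S)\mu_i^2}{\eta_i}\,A_i^\tp P A_i$. The hypotheses $|\mu_i|<\eta_i$ and $\alpha_i \geq \eta_i(1+S)$ now give $(1+S)\mu_i^2/\eta_i < (1+S)\eta_i \leq \alpha_i$ strictly in every direction, whence $M^\tp P M \prec (1+S)A^\tp P A + \sum_i \alpha_i A_i^\tp P A_i = P - I \prec P$, proving stability. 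The degenerate case $S = 0$ forces every $\mu_i = 0$ and the conclusion reduces to $P \succ A^\tp P A + I$, which holds directly.

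The main technical obstacle is calibrating the two splitting parameters so that the auxiliary-system ``noise budget'' $\alpha_i$ is spent term-by-term: the first split with $\epsilon = S$ is what makes the residual coefficient of $A^\tp P A$ exactly $1+S$, and the weights $w_i = \eta_i/S$ in the second split are what distribute the leftover $\tfrac{1+S}{S}\bar A^\tp P \bar A$ across the individual directions with coefficient $(1+S)\eta_i$, which is precisely the quantity that the hypothesis $\alpha_i \geq \eta_i(1+\sum_j \eta_j)$ dominates by $\alpha_i$.
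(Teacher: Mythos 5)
Your proof is correct, and it reaches the conclusion by a genuinely different decomposition than the paper's. The paper also takes the Lyapunov matrix $P$ of the auxiliary system as a common quadratic Lyapunov function, but it absorbs the cross terms term-by-term via $\eta_i A^\tp P A + \eta_i A_i^\tp P A_i \succeq \eta_i(A^\tp P A_i + A_i^\tp P A)$ and $\eta_i\eta_j(A_i^\tp P A_i + A_j^\tp P A_j) \succeq \eta_i\eta_j(A_i^\tp P A_j + A_j^\tp P A_i)$, which establishes the Lyapunov inequality only at the $2^p$ vertices $A + \sum_i k_i\eta_i A_i$, $k_i\in\{-1,+1\}$; it then invokes a separate Schur-complement/convex-hull argument to extend stability to the whole box. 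Your two calibrated completions of squares (the $\epsilon=S$ split between $A$ and $\bar A$, and the Jensen-type bound with weights $\eta_i/S$) handle an arbitrary interior point $\sum_i\mu_i A_i$ in one pass, so you never need the vertex enumeration or the convex-hull lemma --- a cleaner and more self-contained route, at the cost of slightly less elementary intermediate inequalities. Two cosmetic points: the strictness of your final bound comes from the $-I$ term in $P - I \prec P$, not from the coefficient inequality $(1+S)\mu_i^2/\eta_i < \alpha_i$ (which only gives $\preceq$ when $A_i^\tp P A_i$ is singular), so the first relation in your final chain should be $\preceq$; and the weights $w_i = \eta_i/S$ require you to discard indices with $\eta_i = 0$, which is harmless since those force $\mu_i = 0$, but is worth stating alongside your $S=0$ case.
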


\begin{proof}
Mean-square stability implies $\exists \ P$ such that
\begin{align}
    P   
    & \succ 
    \left(\sqrt{1+\sum_{i=1}^p \eta_i} A \right)^\tp P \left(\sqrt{1+\sum_{i=1}^p \eta_i} A \right)  
    + \sum_{i=1}^p \eta_i \left( 1 + \sum_{j=1}^p \eta_j \right) A_i^\tp P A_i \nonumber \\
    & = \Big( 1+\sum_{i=1}^p \eta_i \Big) A^\tp P A + \sum_{i=1}^p \eta_i \Big( 1 + \sum_{j=1}^p \eta_j \Big) A_i^\tp P A_i \nonumber \\
    & \succ A^\tp P A + \sum_{i=1}^p \eta_i A^\tp P A + \sum_{i=1}^p \eta_i A_i^\tp P A_i  
    + \sum_{i=1}^p  \sum_{j=1}^p \eta_i \eta_j A_i^\tp P A_i \nonumber \\
    & \succeq A^\tp P A + \sum_{i=1}^p \eta_i (A^\tp P A_i + A_i^\tp P A)  
    + \sum_{i=1}^p  \sum_{j=1}^p \eta_i \eta_j A_i^\tp P A_i \label{eq:succ_sqr} \\
    & = A^\tp P A + \sum_{i=1}^p \eta_i (A^\tp P A_i + A_i^\tp P A)
    + \sum_{i=1}^p \eta_i^2 A_i^\tp P A_i + \sum_{i=1}^p \sum_{j \neq i} \eta_i \eta_j A_i^\tp P A_i \nonumber \\
    & \succeq A^\tp P A + \sum_{i=1}^p \eta_i (A^\tp P A_i + A_i^\tp P A)  
    + \sum_{i=1}^p \eta_i^2 A_i^\tp P A_i + \sum_{i=1}^p \sum_{j \neq i} \eta_i \eta_j A_i^\tp P A_j \nonumber \\
    & = \Big(A + \sum_{i=1}^p \eta_i A_i\Big)^\tp P \Big(A + \sum_{i=1}^p \eta_i A_i\Big) \nonumber
\end{align}
By symmetry of the terms $\sum_{i=1}^p \eta_i A^\tp P A + \sum_{i=1}^p \eta_i A_i^\tp P A_i$, the same argument can be applied for each sign combination of $\eta_i$ i.e. $\pm \eta_1, \pm \eta_2, \ldots, \pm \eta_p$ from \eqref{eq:succ_sqr} onward, which together prove stability of $A + \sum_{i=1}^p k_i \eta_i A_i$ for any $k_i \in \{-1,+1\}$ with the same Lyapunov matrix $P$. By an argument from Schur complements (see e.g. \cite{Corless1994,boyd1994linear}), this is necessary and sufficient for any convex combination of $A + \sum_{i=1}^p k_i \eta_i A_i$ to be also stable using $P$, completing the proof.
\end{proof}

\begin{Rem}
    The condition $\EE [\gamma_{ti}^2] = \alpha_i \geq \eta_i \left( 1 + \sum_{j=1}^p \eta_j \right)$ places an upper bound on the robustness margins $\eta_i$ which is related to the multiplicative noise variances $\alpha_i$. In the case of $p = 1$, this reduces to 
    $
        \eta_1 < \frac{1}{2} \left( \sqrt{1+4 \alpha_1} - 1 \right) .
    $
\end{Rem}

At first glance the condition of Thm. \ref{thm:robust_multi_shared_lyap} may seem overly restrictive since it requires mean-square stability with a \emph{scaled} $A$ matrix; indeed such a procedure is somewhat limiting in the open-loop setting since this can make the plant unstable. However, in the control design setting this \emph{essentially does not matter} since the gain can be made larger to compensate, and because a simple scaling of $A$ does not affect controllability of the pair $(A, B)$; to see this, simply note that the rank of the controllability matrix 
$\begin{bmatrix}
B & AB & \ldots & A^{n-1} B
\end{bmatrix}
$ is unaffected by a nonzero scaling of $A$. The work of \cite{bernstein1987} similarly leverages this fact.

\clearpage
\section{Input uncertainties and robust control design algorithms} \label{sec:control_design}

In the case where there are uncertainties in the input matrix $B$ under closed-loop state feedback, Theorems \ref{thm:robust_multi_shared_lyap} and \ref{thm:robust_multi_aux} are easily modified by simply substituting
\begin{align*}
    A                \leftarrow A+BK , \quad
    \{ A_i \}        \leftarrow \{ A_i \} \cup \{ B_j K\} , \quad
    \{ \alpha_i \}   \leftarrow \{\alpha_i\} \cup \{\beta_j \} , \quad
    p                \leftarrow p + q ,
\end{align*}
yielding a set of $p+q$ robustness bounds $\{ \eta^\prime_i \} = \{\eta_i\} \cup \{ \psi_j\}$
which ensure stability of 
\begin{align} \label{eq:closed_loop_det_sys}
    x_{t+1} = \Big(A+BK + \sum_{i=1}^p \mu_i A_i + \sum_{j=1}^q \nu_j B_j K \Big) x_t 
\end{align}
where $0 \leq \mu_i < \eta_i$, $0 \leq \nu_j < \psi_j$ (bounds in negative directions also assured for Thm. \ref{thm:robust_multi_aux}).
These results are formulated as Algorithms \ref{alg:robust_control1} and \ref{alg:robust_control2} for generating optimal, maximally robust controllers.
Note that Algorithm \ref{alg:robust_control1} gives unidirectional bounds while Algorithm \ref{alg:robust_control2} gives bidirectional bounds; it is useful to retain the unidirectional bounds of Algorithm \ref{alg:robust_control1} in order to realize the potentially larger robustness margins in opposing directions.

\begin{algorithm2e}[!ht]
\caption{Robust control design} \label{alg:robust_control1}
\DontPrintSemicolon
\KwIn{Controllable nominal pair $(A, B)$, cost matrices $Q \succ 0$, $R \succ 0$, uncertainty directions $A_i$, $B_j$ and magnitudes $\theta_i > 0$, $\phi_j > 0$.}
\KwOut{Gain matrix $K$ and margins $\eta_i$, $\psi_j$ such that \eqref{eq:closed_loop_det_sys} is stable for all $0 \leq \mu_i < \eta_i$, $0 \leq \nu_j < \psi_j$.}
\Define scalar $z$ and scaled multiplicative noise variances 
${\alpha_i = \theta_i \times z}$, and
${\beta_j =  \phi_j \times z}$ \;
\Find the largest $z^*$ which still admits a solution to 
$P = \gare(A,B,Q,R,\alpha_i,\beta_j,A_i,B_j)$ via bisection \;
\Define scalar $y$ and scaled uncertainty magnitudes $\eta_i = \theta_i \times y$, $\psi_j = \phi_j \times y$\;
\Find the largest scaling $y^*$ via bisection which satisfies
\begin{align*}
& Q+K^\tp R K + \sum_{i=1}^{p+q} {\alpha_i}^\prime {A_i^\prime}^\tp P A_i^\prime \\
& \succeq 
\sum_{i=1}^{p+q} \eta_i^\prime \left( {A_i^\prime}^\tp P (A+BK) + (A+BK)^\tp P A_i^\prime \right)^+ \\
& \quad + \sum_{i=1}^{p+q} \sum_{j=1}^{p+q} \eta_i^\prime \eta_j^\prime \left( {A_i^\prime}^\tp P A_j^\prime + {A_j^\prime}^\tp P A_i^\prime \right)^+ \nonumber
\end{align*}
where
$\{ A_i^\prime \} = \{ A_i \} \cup \{ B_j K\}$,
$\{ \alpha_i^\prime \}  = \{\alpha_i\} \cup \{\beta_j \}$,
and
$\{ \eta_i^\prime \} = \{ \eta_i \} \cup \{ \psi_j \}$ \;
\Return control law
${K = -  \left(R + B^\tp P B + z^* \sum_{j=1}^q   \phi_j  B_j^\tp P B_j \right)^{-1} B^\tp P A}$ \\
and margins $\eta_i = \theta_i \times y^*$, $\psi_j = \phi_j \times y^*$
\;
\end{algorithm2e}

\begin{algorithm2e}[!ht] 
\caption{Robust control design} \label{alg:robust_control2}
\DontPrintSemicolon
\KwIn{Controllable nominal pair $(A, B)$ , cost matrices $Q \succ 0$, $R \succ 0$, uncertainty directions $A_i$, $B_j$ and magnitudes $\theta_i > 0$, $\phi_j > 0$.}
\KwOut{Gain matrix $K$ and robustness margins $\eta_i$, $\psi_j$ such that \eqref{eq:closed_loop_det_sys} is stable for all $|\mu_i| < \eta_i$, $|\nu_j| < \psi_j$.}
\Define scalar $y$ and scaled uncertainty magnitudes 
$\eta_i = \theta_i \times y$, $\psi_j = \phi_j \times y$ \;
\Define scaled multiplicative noise variances 
${\alpha_i = \eta_i \left( 1 + \sum_{j=1}^p \eta_j + \sum_{k=1}^q \psi_k \right)}$, and 
${\beta_j = \psi_i \left( 1 + \sum_{i=1}^p \eta_i + \sum_{k=1}^q \psi_k \right)}$ \;
\Define scalar $z(y) = \sqrt{1+\sum_{i=1}^{p} \eta_i+\sum_{j=1}^{q} \psi_i}$, and scaled system matrices $A_z = A \times z$, $B_z = B \times z$ \;
\Find the largest $y^*$ which still admits a solution to 
$P = \gare(A_z,B_z,Q,R,\alpha_i,\beta_j,A_i,B_j)$ via bisection \;
\Return control law
${K = -  \left(R + B_{z}^\tp P B_{z} + \sum_{j=1}^q \beta_j B_j^\tp P B_j \right)^{-1} B_{z}^\tp P A_{z}}$ 
where quantities $P$, $\beta_j$ and $z$ are evaluated at $y^*$, and
margins $\eta_i = \theta_i \times y^*$, $\psi_j = \phi_j \times y^*$
\;
\end{algorithm2e}

\section{Numerical results} \label{sec:numerical_results}

Here we consider an inverted pendulum with a torque-producing actuator whose dynamics have been linearized about the vertical equilibrium. In continuous-time the dynamics are
\begin{align*}
    \dot{x} = 
    \underbrace{
    \begin{bmatrix}
    0 & 1 \\
    m_c & 0
    \end{bmatrix}
    }_{A_c}
    x
    +
    \underbrace{
    \begin{bmatrix}
    0 \\
    1
    \end{bmatrix}  
    }_{B_c}
    u
\end{align*}
where $m_c$ is a normalized mass constant. A forward Euler discretization with step size $\Delta t$ yields
\begin{align*}
    A  = I + A_c\Delta t = 
    \begin{bmatrix}
    1 & \Delta t \\
    m_c \Delta t & 1 
    \end{bmatrix}, \quad
    B  = B_c \Delta t = 
    \begin{bmatrix}
    0 \\
    \Delta t
    \end{bmatrix}     
\end{align*}
Uncertainty on the mass constant $m_c$ corresponds to uncertainty on the $(2,1)$ entry of $A$.
We consider an example where the true mass constant is $\bar{m}_c = 10$, but the nominal model underestimates it as $m_c = 5$; such a situation could easily arise during the initial phase of system identification in adaptive control with noisy measurements, or in time-varying scenarios such as a robot arm picking up a heavy load. We take a step size $\Delta t = 0.1$.
The problem data is then
\begin{align*}
    \bar{A} = 
    \begin{bmatrix}
    1 & 0.1 \\
    1 & 1
    \end{bmatrix}, \
    A  = 
    \begin{bmatrix}
    1 & 0.1 \\
    0.5 & 1    
    \end{bmatrix}, \
    \bar{B} = B = 
    \begin{bmatrix}
    0 \\
    0.1
    \end{bmatrix} , \
    Q = R  =
    \begin{bmatrix}
    1 & 0 \\
    0 & 1
    \end{bmatrix}, \
    A_1 = 
    \begin{bmatrix}
    0 & 0 \\
    1 & 0
    \end{bmatrix}, \
    \theta_1 = 1
\end{align*}

Applying Algorithms \ref{alg:robust_control1}, \ref{alg:robust_control2}, and certainty-equivalent control design, we obtained the results in Table \ref{tab:table}.
We found the sets of true $\bar{A}$ matrices stabilized by the controls from Algos. \ref{alg:robust_control1} and \ref{alg:robust_control2} were
${\bar{A} \in 
\begin{bmatrix}
1 & 0.1 \\
0.1 + \mu_1 & 1
\end{bmatrix}}$ where $|\mu_1| < 3.970$ and $|\mu_1| < 6.997$ respectively. Stability of all systems within these sets was empirically verified by a fine grid search using 10000 samples of $\mu_1$ in each interval. 
Both robustness sets happened to include the true matrix $\bar{A}$, so the robust controls were guaranteed to stabilize the true system, confirmed by  $\rho(\bar{A}+\bar{B} K) < 1$.
By contrast, the certainty-equivalent control failed to stabilize the true system.
This can be understood intuitively; the pendulum had a larger mass in reality than in the nominal model, so a larger control effort was necessary to stabilize the pendulum and prevent it from falling over. 
Although on this particular example Algorithm 1 gave a larger (unidirectional) robustness margin, in general this not need hold; certain problem instances admit much larger robustness margins using Algorithm 2 relative to Algorithm 1. Thus, our two algorithms may be considered complementary from a control design standpoint.

Code which implements this example is available at: \\
\url{https://github.com/TSummersLab/robust-control-multinoise}.

\begin{table*}[!ht] 
\caption{Stability results for robust control of an inverted pendulum.} \label{tab:table}
\centering
\normalsize
\begin{tabular}{ c c c c c }
\toprule
Parameter & Open-loop & Certainty-equivalent & Algorithm \ref{alg:robust_control1} & Algorithm \ref{alg:robust_control2}  \\
\midrule\\
\addlinespace[-3ex]
$K$ 
&
$\begin{bmatrix}
0 \! & \! 0
\end{bmatrix}$
&
$\begin{bmatrix}
-9.14 \! & \! -4.15
\end{bmatrix}$
&
$\begin{bmatrix}
-103.87 \! & \! -19.85
\end{bmatrix}$
&
$\begin{bmatrix}
-104.52 \! & \! -19.94
\end{bmatrix}$ 
\\
$\rho(\bar{A}+\bar{B} K)$
& 1.316
& 1.019
& 0.222
& 0.225 
\\
$\rho(A+BK)$
& 1.223
& 0.833
& 0.060
& 0.020
\\
$\eta_1$
& -
& -
& 6.997
& 3.970
\\
$\underset{0 \leq \mu_1 < \eta_1}{\max}{\rho(A+BK+\mu_1 A_1)}$
& -
& -
& 0.841
& 0.632
\\
\bottomrule
\end{tabular}
\end{table*}

\clearpage
\section{Conclusion and Future Work} \label{sec:conclusion}

This work gives an effective methodology for certifying robustness and designing robust controllers with favorable properties and flexibility relative to competing approaches.

Direct extensions to this work include finding sharper bounds, e.g., via alternate auxiliary systems analogous to the one used in Section \ref{sec:robust_aux}, and handling nonlinear dependence of the dynamics and/or noise on states and inputs. Future work will integrate the results of this work with adaptive model-based learning control for an end-to-end control framework which gracefully transitions from maximal robustness to maximal performance according to empirical uncertainties.

\bibliographystyle{plain}
\bibliography{bibliography.bib}

\end{document}